\newtheorem{thm}{Theorem}
\theoremstyle{definition}
\newtheorem{rem}[thm]{Remark}
\renewcommand{\phi}{\varphi}
\renewcommand{\tilde}{\widetilde}
\renewcommand\bar\overline
\author{S{\o}ren Knudby}
\address{S\o ren Knudby
\newline Mathematical Institute, University of M\"unster,
\newline Einsteinstra\ss{}e 62, 48149 M\"unster, Germany.}
\email{knudby@uni-muenster.de}
\thanks{Supported by the Deutsche Forschungsgemeinschaft through the Collaborative Research Centre (SFB 878).}
\title[On connected Lie groups and the Approximation Property]{\texorpdfstring{On connected Lie groups and\\ the Approximation Property}{Connected Lie groups and the Approximation Property}}
\date{\today}
\begin{document}

\begin{abstract}
Recently, a complete characterization of connected Lie groups with the Approximation Property was given. The proof used the newly introduced property~(T$^*$). We present here a short proof of the same result avoiding the use of property (T$^*$). Using property (T$^*$), however, the characterization is extended to every almost connected group. We end with some remarks about the impossibility of going beyond the almost connected case.
\end{abstract}

\maketitle
\thispagestyle{empty}

The Fourier algebra $A(G)$ of a locally compact group $G$ was introduced by Eymard \cite{MR0228628} and can be defined as the coeffient space of the left regular representation of $G$ on $L^2(G)$,
$$
A(G) = \{ f*\widetilde g \mid f,g\in L^2(G)\},
$$
where $\tilde g(x) = \overline{g(x^{-1})}$. One can identify $A(G)$ with the predual of the group von Neumann algebra associated with the left regular representation. A multiplier $\phi$ of $A(G)$ is called completely bounded if its transposed operator $M_\phi$ is a completely bounded operator on the group von Neumann algebra. The completely bounded multiplier norm of $\phi$ is defined as
$$
\|\phi\|_{M_0A} = \|M_\phi\|_{\mathrm{cb}}.
$$
The space of completely bounded multipliers is dentoed $M_0A(G)$. One has an inclusion $A(G)\subseteq M_0A(G)$. It was shown in \cite[Proposition~1.10]{MR784292} that the space $M_0A(G)$ has a predual obtained as the completion of $L^1(G)$ in the norm
$$
\|f\|_{M_0A(G)_*} = \sup\left\{\left|\int_G f(x)\phi(x)\, dx\right| : \phi\in M_0A(G),\ \|\phi\|_{M_0A}\leq 1\right\}.
$$
\begin{rem}
In general, $M_0A(G)$ does not have a unique predual nor a unique weak$^*$ topology. The predual always refers to the one constructed explicitly above, and the weak$^*$ topology on $M_0A(G)$ is the weak$^*$ topology coming from this explicit predual.
\end{rem}

Following Haagerup and Kraus \cite{MR1220905}, a locally compact group $G$ has the Approximation Property (short: AP) if there exists a net in $A(G)$ converging to $1$ in the weak$^*$ topology of $M_0A(G)$. Haagerup and Kraus showed that many groups have this property, e.g., all weakly amenable groups. This includes solvable groups, compact groups, and simple Lie groups of real rank one \cite{MR996553,MR1079871}. They showed moreover that the AP is preserved by passing to closed subgroups and preserved under group extensions (as opposed to weak amenability). It is also well-known (and routine to verify) that if $K$ is a compact normal subgroup in $G$, then $G$ has the AP if and only if $G/K$ has the AP. We refer to \cite{MR996553,MR784292,MR1220905} for details on completely bounded multipliers and the AP.

Haagerup and Kraus conjectured that the group $\mathrm{SL}(3,\mathbb R)$ does not have the AP, but the conjecture was settled only much later by Lafforgue and de~la~Salle \cite{MR2838352}. Not long after, Haagerup and de~Laat showed more generally that any simple Lie group of real rank at least two does not have the AP \cite{MR3047470,MR3453357}. Finally, a complete characterization of connected Lie groups with the AP was given in \cite{HKdL-Tstar} (see also Theorem~\ref{thm:HKdL} below). The proof in \cite{HKdL-Tstar} involved among other things the property (T$^*$), introduced in \cite{HKdL-Tstar}. We give below a much shorter proof of the characterization of connected Lie groups with the AP that avoids the use of property (T$^*$).

To state the characterization of connected Lie groups with the AP, we first recall the Levi decomposition of such a group (see \cite[Section~3.18]{MR746308} for details). For a connected Lie group $G$, one can decompose its Lie algebra $\mathfrak g$ as $\mathfrak{g}=\mathfrak{r} \rtimes \mathfrak{s}$, where $\mathfrak{r}$ is the solvable radical and $\mathfrak{s}$ is a semisimple subalgebra. One can further decompose $\mathfrak{s}=\mathfrak{s}_1 \oplus \cdots \oplus \mathfrak{s}_n$ into simple summands $\mathfrak{s}_i$ ($i=1,\ldots,n$). Let $R$, $S$, and $S_i$ denote the corresponding connected Lie subgroups of $G$. One has $G = RS$ as a set. This is called a Levi decomposition of $G$.

The subgroup $R$ is solvable, normal, and closed. The subgroup $S$ is semisimple and locally isomorphic to the direct product $S_1\times\cdots\times S_n$ of simple factors, but it need not be closed in $G$.

\begin{thm}[\cite{HKdL-Tstar}]\label{thm:HKdL}
Let $G$ be a connected Lie group, let $G=RS$ denote a Levi decomposition, and suppose that $S$ is locally isomorphic to the product $S_1 \times \cdots \times S_n$ of connected simple factors. Then the following are equivalent:
\begin{enumerate}
 \item[(i)] the group $G$ has the AP,
 \item[(ii)] the group $S$ has the AP,
 \item[(iii)] the groups $S_i$, with $i=1,\ldots,n$, have the AP,
 \item[(iv)] the real rank of the groups $S_i$, with $i=1,\ldots,n$, is at most $1$.
\end{enumerate}
\end{thm}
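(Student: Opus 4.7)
The plan is to establish (i) $\Leftrightarrow$ (ii) $\Leftrightarrow$ (iii) $\Leftrightarrow$ (iv) from three families of ingredients already assembled in the introduction or classical: the permanence properties of the AP (heredity to closed subgroups, stability under extensions, invariance under compact normal subgroups, AP of solvable groups, AP of compact groups); the Cowling-Haagerup theorem \cite{MR996553,MR1079871} that every connected simple Lie group of real rank at most one is weakly amenable and hence has the AP; and the Haagerup-de~Laat theorem \cite{MR3047470,MR3453357} that no connected simple Lie group of real rank at least two has the AP.

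First I would reduce $G$ to its semisimple quotient. Since $R$ is closed, normal, and solvable, $R$ has the AP, so by extension stability (i) is equivalent to $G/R$ having the AP; topologically $G/R \cong S/(S\cap R)$, and $S\cap R$ is a discrete normal, hence central, subgroup of $S$ (it has trivial Lie algebra by the transversality $\mathfrak{r}\cap\mathfrak{s}=0$). For the step (ii) $\Leftrightarrow$ (iii) I would work through the universal cover $\tilde S$ of $S$, which factors as $\tilde S_1 \times \cdots \times \tilde S_n$: AP of a finite direct product is equivalent to AP of each factor (one direction by the extension theorem, the other by heredity to closed subgroups), and this is to be transferred between $\tilde S$ and $S$, and between $\tilde S_i$ and $S_i$, through the intervening discrete central subgroups. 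Finally, (iii) $\Leftrightarrow$ (iv) is immediate from Cowling-Haagerup and Haagerup-de~Laat, since the real rank of $S_i$ is a Lie-algebra invariant.

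The main obstacle is handling discrete central subgroups that are not compact; the universal cover of $\mathrm{SL}(2,\mathbb R)$ with center $\mathbb Z$ is the canonical warning example, and it is precisely to address this that property (T$^*$) was introduced in \cite{HKdL-Tstar}. A short proof avoiding (T$^*$) must therefore supply a replacement showing that the AP survives passage through such non-compact discrete central subgroups in the connected semisimple setting. A plausible route is a direct multiplier-level argument exploiting that the non-compact part of such a discrete central subgroup lies inside the kernel of the projection to the adjoint form, which for rank at most one yields a (centerless) simple Lie group already known to be weakly amenable; combined with the permanence of the AP under extensions by amenable groups, this would let one reduce every transfer step to the case of a finite (hence compact) central subgroup where the classical compact-kernel invariance applies. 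Making this replacement precise, and ensuring it interacts correctly with the extension $1\to R\to G\to G/R\to 1$ and with the possibly non-closed embedding $S\hookrightarrow G$, is the conceptual heart of the short proof.
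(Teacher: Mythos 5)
Your reduction of (i) to the AP of $G/R$, the equivalence (iii)$\Leftrightarrow$(iv), and the positive direction (iv)$\Rightarrow$(i),(ii) are essentially sound, and the device you reach for at the end --- pass to the adjoint form, where Cowling--Haagerup applies, and climb back up by the extension theorem, the center of a connected semisimple Lie group being discrete and hence amenable --- is exactly how the paper handles the non-compact discrete centers. Note, though, that this makes your detour through the universal cover unnecessary and in fact hazardous: the transfer from $\tilde S$ \emph{down} to $S=\tilde S/\Gamma$ is a quotient by a possibly non-compact discrete normal subgroup, and none of the listed permanence properties covers such quotients (only compact normal subgroups, closed subgroups, and extensions). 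The safe direction is always upward, from a quotient with the AP to the extension, which is why one should start at the centerless adjoint form rather than at the simply connected cover.

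The genuine gap is in the negative direction, i.e.\ the implication not(iv)$\Rightarrow$not(i). Your chain (i)$\Leftrightarrow$(ii)$\Leftrightarrow$(iii) must transfer the failure of the AP from a higher-rank factor $S_i$ up to $G$, and the only available tool is heredity of the AP to \emph{closed} subgroups --- but neither $S$ nor $S_i$ need be closed in $G$, as you yourself note. The workaround you sketch (projecting a rank-one factor onto its adjoint form) lives entirely in the positive direction and says nothing here. The missing ingredient, which is the one genuinely new idea of the short proof, is Mostow's theorem \cite{MR0048464}: the closure of $S_i$ in $G$ equals $S_iC$ for a connected \emph{compact} central subgroup $C$ of $G$. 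One then passes to $G'=G/C$ with quotient map $\pi$; since $C$ is compact, $G$ has the AP iff $G'$ does, the image $\pi(S_i)=\pi(\overline{S_i})$ is closed in $G'$, and $\ker\pi\cap S_i$ is central in $S_i$, so $\pi(S_i)$ is still locally isomorphic to $S_i$ and hence fails the AP by Haagerup--de Laat when the real rank is at least $2$. Heredity to closed subgroups then kills the AP for $G'$ and hence for $G$. Without this step (or an equivalent substitute such as property (T$^*$), which is what \cite{HKdL-Tstar} uses for precisely this transfer --- not, as you suggest, for the discrete-center issue), your proof does not close.
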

\begin{proof}
The equivalence of (iii) and (iv) is \cite[Theorem~5.1]{MR3453357}.

Suppose (iv) holds. We show that (i) and (ii) hold. The proof of this implication is the proof from \cite{HKdL-Tstar}. For completeness, we include it: The group $R$ is solvable, so it has the AP. Since the AP is preserved under group extenstions \cite[Theorem~1.15]{MR1220905}, it is therefore enough to prove that $G/R$ has the AP. The Lie algebra of $G/R$ is the Lie algebra of $S$, so to prove (i) and (ii), it suffices to prove that any connected Lie group $S'$ locally isomorphic to $S_1\times\cdots\times S_n$ has the AP. Using \cite[Theorem~1.15]{MR1220905} again, we may assume that the center of $S'$ is trivial. If $Z_i$ denotes the center of $S_i$, then $S' \simeq (S_1/Z_1)\times\cdots\times(S_n/Z_n)$. Each group $S_i/Z_i$ has real rank at most $1$ and hence has the AP \cite{MR996553,MR1079871}. Hence $S'$ has the AP.

Suppose (iv) does not hold. We show that (i) does not hold (which also proves that (ii) does not hold). Fix some $i$. The closure $\overline S_i$ of $S_i$ in $G$ is of the form $S_iC$ for some connected, compact, central subgroup of $G$ (see \cite[p.~614]{MR0048464}). Let $G' = G/C$, and let $\pi\colon G\to G'$ be the quotient homomorphism. As $C$ is compact, $\pi$ is a closed map, and $\pi(S_i) =\pi(\overline S_i)$ is a closed subgroup of $G'$. Since $\ker\pi\cap S_i$ is contained in the center of $S_i$ and therefore is discrete (in the topology of $S_i$), the group $\pi(S_i)$ is locally isomorphic to $S_i$. If now the real rank of $S_i$ is at least 2, then $\pi(S_i)$ does not have the AP \cite[Theorem~5.1]{MR3453357}. Hence $G'$ does not have the AP, and it follows that $G$ does not have the AP.
\end{proof}


\vfill\pagebreak
\section*{A characterization of almost connected groups with the AP}

Let $G$ be a locally compact group. Recall from \cite{HKdL-Tstar} that there is a unique left invariant mean on $M_0A(G)$, and we say that $G$ has property (T$^*$) if this mean is weak$^*$ continuous. It is clear that non-compact groups with property (T$^*$) do not have the AP. We establish the following converse for almost connected groups.

\begin{thm}\label{thm:almost-connected}
For an almost connected locally compact group $G$, the following are equivalent.
\begin{enumerate}
	\item $G$ has the AP.
	\item No closed non-compact subgroup of $G$ has property (T$^*$).
\end{enumerate}
\end{thm}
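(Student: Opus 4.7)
The direction $(1)\Rightarrow(2)$ should be essentially immediate: if $G$ has the AP and $H\leq G$ is a closed subgroup with property~(T$^*$), then $H$ inherits the AP from $G$ (AP passes to closed subgroups), while the observation made just before the theorem (non-compact groups with (T$^*$) lack the AP) forces $H$ to be compact. My focus will therefore be on $(2)\Rightarrow(1)$, which I plan to prove by contrapositive: assuming $G$ lacks the AP, I aim to exhibit a closed non-compact subgroup of $G$ with property~(T$^*$).

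My first step is to reduce to the case of a connected Lie group so that Theorem~\ref{thm:HKdL} is available. Almost connectedness of $G$ lets me invoke Yamabe's theorem to find a compact normal subgroup $K\trianglelefteq G$ with $G/K$ a Lie group; by the insensitivity of the AP to compact normal subgroups, $G/K$ still lacks the AP. Its identity component $L:=(G/K)_0$ is open of finite index in $G/K$ (since $(G/K)/L$ is compact and discrete, hence finite), and since the AP is preserved under finite-group extensions, $L$ lacks the AP as well. Feeding $L$ into Theorem~\ref{thm:HKdL} produces a simple factor $S_i$ of the Levi decomposition of $L$ whose real rank is at least~$2$.

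The heart of the argument is to promote $S_i$ to a closed non-compact subgroup of $G$ with (T$^*$). The $\neg(\text{iv})\Rightarrow\neg(\text{i})$ direction of Theorem~\ref{thm:HKdL} already singles out a good candidate inside $L$: the closure $\overline{S_i}$ equals $S_iC$ for some connected compact central subgroup $C$ of $L$, and $\overline{S_i}/C$ is locally isomorphic to the higher-rank simple group $S_i$. Pulling $\overline{S_i}$ back along the quotient map $G\to G/K$ gives a closed subgroup $H\leq G$ with $H/K\cong\overline{S_i}$, and $H$ is non-compact because $\overline{S_i}$ is. To check (T$^*$) for $H$, I would appeal to \cite{HKdL-Tstar}: higher-rank simple Lie groups and their locally isomorphic relatives have (T$^*$), so $\overline{S_i}/C$ does, and then (T$^*$) propagates up through the compact-kernel extensions $C\trianglelefteq\overline{S_i}$ and $K\trianglelefteq H$.

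The principal technical point I expect to verify carefully is that property~(T$^*$) really satisfies the permanence properties invoked above---invariance under local isomorphism among connected Lie groups, and preservation under compact-kernel extensions with a possibly non-connected and non-central kernel. Both are exactly the kind of statements I expect to find in \cite{HKdL-Tstar}; should either appear there only in restricted form, a brief supplementary argument should close the gap without changing the overall strategy.
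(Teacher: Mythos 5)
Your argument is correct and follows essentially the same route as the paper: reduce to a Lie quotient $G/K$ via Yamabe, produce a closed non-compact subgroup of $G/K$ with property (T$^*$), and pull it back to $G$ using the stability of (T$^*$) under compact-kernel extensions (\cite[Proposition~5.13]{HKdL-Tstar}). The only divergence is that where the paper invokes the remark after Theorem~C of \cite{HKdL-Tstar} as a black box for the middle step, you unpack it via Theorem~\ref{thm:HKdL}, Mostow's description $\overline{S_i}=S_iC$, and the fact from \cite{HKdL-Tstar} that higher-rank connected simple Lie groups have (T$^*$); in doing so you also correctly pass to the identity component of $G/K$, a point the paper glosses over by calling $G/K$ connected.
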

\begin{proof}
(1)$\implies$(2) is clear and holds also without the assumption of almost connectedness. We prove (2)$\implies$(1). Suppose $G$ does not have AP. We prove the existence of a closed non-compact subgroup of $G$ with property (T$^*$).

As $G$ is almost connected, 
there is a compact normal subgroup $K$ of $G$ such that $G/K$ is a Lie group (see \cite[Theorem~4.6]{MR0073104}). Then $G/K$ is a connected Lie group without the AP. It follows from the remark after Theorem~C in \cite{HKdL-Tstar} that we can find a closed, non-compact subgroup $H_0\leq G/K$ with property (T$^*$). Let $H$ be the inverse image of $H_0$ in $G$. By \cite[Proposition~5.13]{HKdL-Tstar}, $H$ has property (T$^*$), and $H$ is clearly a closed non-compact subgroup of $G$.
\end{proof}

\begin{rem}
Theorem~\ref{thm:almost-connected} is not true in general without the assumption of almost connectedness. In fact, there are discrete groups without the AP and without infinite subgroups with property (T$^*$). This is not a new result, but simply a compilation of known results, the last ingredient being the recent result of Arzhantseva and Osajda (see \cite[Theorem~2]{osajda14} and \cite{Arzhantseva-Osajda-14}) about the existence of discrete groups without property A but with the Haagerup property. Let $G$ be such a group.

As property (T$^*$) implies property (T) (see \cite[Proposition~5.3]{HKdL-Tstar}), and property (T) is an obstruction to the Haagerup property, it is clear that $G$ has no infinite subgroups with property (T$^*$).

On the other hand, if the discrete group $G$ had the AP then its reduced group C$^*$-algebra would have the strong operator approximation property. This is known to imply exactness of the group C$^*$-algebra, which implies that $G$ is an exact group. Finally, exactness for groups is equivalent to property A. This shows that $G$ cannot have the AP. Proofs of all the claimed statements can be found in \cite[Chapters~5 and 12]{MR2391387}.
\end{rem}


\end{document}